\newtheorem{theorem}{Theorem}[section]
\newtheorem{corollary}{Corollary}[section]
\newtheorem{proposition}{Proposition}[section]
\newtheorem{definition}{Definition}[section]
\newcommand\relphantom[1]{\mathrel{\phantom{#1}}}
\DeclareMathOperator\spec{spectrum}
\title{On universality and convergence of the Fourier series of functions in the disc algebra.}
\author{C. Papachristodoulos, M. Papadimitrakis.}
\address{Department of Mathematics and Applied Mathematics, University of Crete, 70013 Iraklio, Crete, Greece}
\email{papadim@math.uoc.gr}
\subjclass[2010]{47B35, 30H35, 30H10}
\keywords{Universality, disc algebra.}
\thanks{The first author is supported by the project 3083-FOURIERDIG which is implemented under the ``Aristeia II'' Action of the ``Operational Programme 
Education and Lifelong Learning'' and is co-founded by the European Social Fund (ESF) and National Resources.}
\begin{document}
{\allowdisplaybreaks
\pagestyle{plain}
\begin{abstract}
We construct functions in the disc algebra with pointwise universal Fourier series on sets which are $G_{\delta}$ and dense and at the same time with Fourier 
series whose set of divergence is of Hausdorff dimension zero. We also see that some classes of closed sets of measure zero do not accept uniformly universal 
Fourier series, although all such sets accept divergent Fourier series.  
\end{abstract}
\maketitle

\section{Introduction and notation.}

\par Let $\mathbb D=\{z\in\mathbb C\,|\,|z|<1\}$, $\mathbb T=\{z\in\mathbb C\,|\,|z|=1\}=\mathbb R/2\pi$. We denote by $C(\mathbb T)$ the set of complex
continuous functions with the supremum norm $\|\cdot\|$ and, for $f\in C(\mathbb T)$, by $S_n(f,t)$ the $n$-th partial sum of the Fourier series of $f$ at the
point $t\in\mathbb T$,
$$ S_n(f,t)=\sum_{k=-n}^n\widehat f(k)e^{ikt},$$
where
$$\widehat f(k)=\frac 1{2\pi}\int_{-\pi}^{\pi}f(t)e^{-ikt}\,dt,\qquad k\in\mathbb Z,$$
is the $k$-th Fourier coefficient of $f$. 
\par Also, let $A(\mathbb D)=\{f\in C(\mathbb T)\,|\,\widehat f(k)=0\,\,\text{for}\,\,k<0\}$ be the disc algebra with the supremum norm.
\par If $(X,d)$ is a complete metric space, a property is said to be satisfied at quasi all points of $X$ if it is satisfied at a $G_{\delta}$ and dense set,
i.e. at a topologically large set.
\par First we recall a few facts regarding the divergence of the partial sums $S_n(f,t)$ that the reader must have in mind.
\par The first, from \cite{K}, is that quasi all $f\in C(\mathbb T)$ have the property that their Fourier series diverge at quasi all points of $\mathbb T$.
\par The second (see \cite{Ka}) is the classical result that $E\subseteq\mathbb T$ is a set of divergence for $C(\mathbb T)$, i.e. there is a continuous 
function whose Fourier series diverges at all points of $E$, if and only if $E$ is a set of infinite divergence for $C(\mathbb T)$, i.e. there is a continuous 
function $f$ such that $\varlimsup|S_n(f,t)|=+\infty$ for all $t\in E$. Moreover, every set of Lebesgue measure zero is a set of divergence for $C(\mathbb T)$.
\par It is not hard to see that a set $E\subseteq\mathbb T$ may be $G_{\delta}$ and dense and simultaneously of Lebesgue measure zero (see \cite{Ox}). Of
course, by Carleson's theorem a set of divergence for $C(\mathbb T)$ has necessarily measure zero.
\par The third fact, from \cite{BH}, is that the set $\{t\,|\,\varlimsup|S_n(f,t)|=+\infty\}$ has Hausdorff dimension equal to $1$ for quasi all $f\in 
C(\mathbb T)$. 
\par Recently, in \cite{M} and \cite{HK}, a different notion of divergence has been studied. We present the definitions and the basic results of these papers. 
\begin{definition}
Let $E\subseteq\mathbb T$. We say that $f\in C(\mathbb T)$ is pointwise universal on $E$ if for every $g:E\to\mathbb C$ belonging to the Baire-1 class there 
exists a strictly increasing sequence of positive integers $(k_n)$ such that $S_{k_n}(f,t)\to g(t)$ for all $t\in E$.\newline
We denote the class of these functions by $U_p(E)$.
\end{definition}
\begin{definition}
Let $K\subseteq\mathbb T$ be a compact set. We say that $f\in C(\mathbb T)$ is uniformly universal on $K$ if for every continuous $g:K\to\mathbb C$ there 
exists a strictly increasing sequence of positive integers $(k_n)$ such that $\|S_{k_n}(f,\cdot)-g\|_K\to 0$, where $\|\cdot\|_K$ is the supremum norm on 
$K$.\newline
We denote the class of these functions by $U(K)$.
\end{definition}
\par Of course the notions of pointwise universality and uniform universality coincide when the set $E=K$ is finite. In this case we speak about universality 
on $E=K$.
\par If we denote by $\mathcal K(\mathbb T)$ the complete metric space of all nonempty compact subsets of $\mathbb T$ with the Hausdorff metric then we know 
that quasi all $f\in C(\mathbb T)$ and quasi all $f\in A(\mathbb D)$ are uniformly universal on quasi all sets $K\in\mathcal K(\mathbb T)$. (For $C(\mathbb 
T)$ see \cite{M} and for $A(\mathbb D)$ see \cite{HK}.) And since quasi all sets in $\mathcal K(\mathbb T)$ are perfect sets (see \cite{Ko}) we know that there 
are perfect $K\subseteq\mathbb T$ such that $A(\mathbb D)\cap U(K)\neq\emptyset$.
\par Finally, we know that for each countable $E\subseteq\mathbb T$ quasi all $f\in C(\mathbb T)$ and quasi all $f\in A(\mathbb D)$ are pointwise universal on 
$E$. (For $C(\mathbb T)$ see \cite{M} and for $A(\mathbb D)$ see \cite{HK}.)
\par The proofs of the above results are not constructive. They use Baire's category theorem. Hence a first question which arises is to construct a uniformly or 
pointwise universal function. A second question is what can we say about the convergence of the Fourier series outside $E$ of a function in $U_p(E)$. If for 
example $f\in A(\mathbb D)\cap U_p(E)$, where $E$ is a countable dense set in $\mathbb T$, then the Fourier series of $f$ cannot converge at all points outside 
$E$, since $E\subseteq G\subseteq D$, where $G=\bigcap_{N=1}^{+\infty}\bigcup_{n=1}^{+\infty}\{t\,|\,S_n(f,t)>N\}$ and 
$D=\{t\,|\,S_n(f,t)\,\,\text{diverges}\}$, and $G$ is $G_{\delta}$ and dense and hence uncountable. Also, taking into account that the pointwise universal 
functions are highly divergent, we may ask whether it is possible that the above set $D$ has Hausdorff dimension less than $1$. We deal with these questions in 
sections 2 and 3. More precisely, we give a method to construct pointwise universal functions in $A(\mathbb D)$ on finite and countably infinite sets. We also 
give a criterion for convergence of the Fourier series outside the set of pointwise universality and we see that the above set $D$ can even be of Hausdorff 
dimension equal to $0$. Of course, the above method can also be applied for functions in $C(\mathbb T)$.
\par In section 4 we turn to the study of uniform universality (see definition 1.2). By Carleson's theorem the perfect sets which accept uniform universality 
must have Lebesgue measure zero. Of course the first class of such perfect sets that comes to mind consists of the familiar Cantor type sets. We prove that 
these sets do not accept uniform universality. Moreover, we prove that the same is true for a class of compact countable sets. This is in contrast with the 
well known fact that all sets of measure zero are sets of divergence for $C(\mathbb T)$.
\par Finally, we close this paper with some open problems.
\par In the following the symbol $C$ will denote an absolute constant which may change from one relation to the next. 

\section{Universality on finite $K\subseteq\mathbb T$ and convergence on $\mathbb T\setminus K$.}

\par Let $N>n$. We consider the Fejer polynomials
$$Q_{N,n}(t)=2\sin Nt\sum_{k=1}^n\frac{\sin kt}k=\sum_{m=-(N+n)}^{N+n}\widehat{Q_{N,n}}(m)e^{imt},$$
where
\begin{equation}
\widehat{Q_{N,n}}(\pm m)=\begin{cases}\frac 1{2k}, &\text{$m=N-k$, $k=1,\ldots,n$}\notag\\
-\frac 1{2k}, &\text{$m=N+k$, $k=1,\ldots,n$}\\
0, &\text{otherwise}\end{cases}
\end{equation}
\par Obviously,
$$Q_{N,n}(0)=0.$$
\par It is well known that the Fejer polynomials are uniformly bounded, i.e.
$$\|Q_{N,n}\|\leq C.$$
\par Also,
$$S_N(Q_{N,n},0)=\frac 1n+\cdots+1\sim\log n$$
and the sequences of the Fourier coefficients of $Q_{N,n}$ are of uniform bounded variation, i.e.
$$\sum_{m=-(N+n)}^{N+n+1}|\widehat{Q_{N,n}}(m-1)-\widehat{Q_{N,n}}(m)|\leq C.$$
\par Finally, we have the estimate
$$|S_k(Q_{N,n},t)|\leq\frac C{|t|}\qquad\text{for all}\,\,k\,\,\text{and}\,\,t\neq 0.$$
\par For all these properties see \cite{Ka}, Ch II, exercise 2.3.
\par Now, considering arbitrary $c\in\mathbb C$ and $\epsilon>0$ and setting
$$P_{N,n}=\frac c{S_N(Q_{N,n},0)}\,Q_{N,n},$$
it follows easily that if $N>n$ are large enough then
$$P_{N,n}(0)=0,$$
$$S_{N}(P_{N,n},0)=c,$$
$$\|P_{N,n}\|<\epsilon,$$ 
$$\sum_{m=-(N+n)}^{N+n+1}|\widehat{P_{N,n}}(m-1)-\widehat{P_{N,n}}(m)|\leq\epsilon,$$
$$|S_k(P_{N,n},t)|\leq\frac{\epsilon}{|t|}\qquad\text{for all}\,\,k\,\,\text{and}\,\,t\neq 0.$$
\par Let $\{c_j\,|\,j\in\mathbb N\}$ be a countable dense set in $\mathbb C$ and let $\epsilon, \epsilon_j>0$ with
$$\epsilon=\sum_{j=1}^{+\infty}\epsilon_j.$$
\par Now for each $j$ we can choose arbitrarily large $N_j>n_j$ and polynomials $P_{N_j,n_j}$ such that
\begin{equation}
P_{N_j,n_j}(0)=0, 
\end{equation}
\begin{equation}
 S_N(P_{N_j,n_j},0)=c_j,
\end{equation}
\begin{equation}
 \|P_{N_j,n_j}\|<\epsilon_j,
\end{equation}
\begin{equation}
 \sum_{m=-(N_j+n_j)}^{N_j+n_j+1}|\widehat{P_{N_j,n_j}}(m-1)-\widehat{P_{N_j,n_j}}(m)|\leq\epsilon_j.
\end{equation}
\begin{equation}
 |S_k(P_{N_j,n_j},t)|\leq\frac{\epsilon_j}{|t|}\qquad\text{for all}\,\,k\,\,\text{and}\,\,t\neq 0.
\end{equation}
\par We also choose the $N_j>n_j$ to satisfy the inequalities
\begin{equation}
2N_j+(N_j+n_j)<2N_{j+1}-(N_{j+1}+n_{j+1}) 
\end{equation}
for all $j$ and we set
\begin{equation}
\begin{split}
P_j(t)&=e^{2iN_jt}P_{N_j,n_j}(t)\\
\mathcal B_j&=\{N_j-n_j,\ldots,3N_j+n_j\}\supseteq\spec(P_j).
\end{split}
\end{equation}
\par From (6) we have
\begin{equation}
\mathcal B_j\prec \mathcal B_{j+1}, 
\end{equation}
where $\prec$ means that the block $\mathcal B_j$ lies to the left of $\mathcal B_{j+1}$, that is $\max\mathcal B_j<\min\mathcal B_{j+1}$.
\par Hence (3) implies that the series
$$\sum_{j=1}^{+\infty}P_j=g$$
converges uniformly to a function $g\in C(\mathbb T)$ such that
$$\|g\|<\epsilon.$$
Also, from (7) and (8) we get 
\begin{equation}
\begin{split}
\widehat{g}(n)=\begin{cases}\widehat{P_j}(n)=\widehat{P_{N_j,n_j}}(n-2N_j), &\text{if $n\in\mathcal B_j$, $j\in\mathbb N$}\\ 0, &\text{otherwise}\end{cases}
\end{split}
\end{equation}
In particular $\widehat{g}(n)=0$ for $n<0$ and thus $g$ belongs to $A(\mathbb D)$.
\par Also, (1) and (2) imply
\begin{equation}
S_{3N_j}(g,0)=c_j.
\end{equation}
Consequently, $g$ is universal on $K=\{0\}$.
\par Moreover, (4), (8) and (9) imply
$$\sum_{n=0}^{+\infty}|\widehat{g}(n-1)-\widehat{g}(n)|<\epsilon$$
i.e. the sequence $(\widehat{g}(n))$ is of bounded variation. Hence the Fourier series $\sum_{n=0}^{+\infty}\widehat{g}(n)e^{int}$ converges for $t\neq 0$
and uniformly in each closed interval of $\mathbb T$ which does not contain $0$. See \cite{Z}, Ch I, Theorem (2.6). Moreover, (5) implies
\begin{equation}
|S_k(P_j,t)|\leq\frac{\epsilon_j}{|t|}\qquad\text{for all}\,\,j,k\,\,\text{and}\,\,t\neq 0.
\end{equation}
\par From (7) and (8) and the uniform convergence of the series $\sum_{j=1}^{+\infty}P_j$ we get
$$S_n(g,t)\to g(t)\qquad\text{as}\,\,n\to+\infty\,\,\text{and}\,\,n\notin\bigcup_{j=1}^{+\infty}\mathcal B_j.$$
\par The previous constructions can be extended for any finite number of points $t_1,\ldots,t_m\in\mathbb T$. For simplicity we present the construction for two
points.
\par Let $t_1\neq t_2$ and $\{(a_j,b_j)\,|\,j\in\mathbb N\}$ be a countable dense set in $\mathbb C^2$. We consider the functions 
$$f_1(t)=g_1(t-t_1)=\sum_{j=1}^{+\infty}P_j^{(1)}(t),\quad f_2(t)=g_2(t-t_2)=\sum_{j=1}^{+\infty}P_j^{(2)}(t),$$ 
where $g_1$ is the function $g$ constructed above with $(a_j)$ in place of $(c_j)$ and $g_2$ is the function $g$ with $(b_j)$ in place of $(c_j)$. Note that 
the polynomials $P_j^{(1)},P_j^{(2)}$ have their spectrum in $\mathcal B_j$ and satisfy (11) with $t-t_1,t-t_2$ repsectively at the denominator of the right 
side.
\par Relation (10) becomes
$$S_{3N_j}(f_1,t_1)=a_j,\qquad S_{3N_j}(f_2,t_2)=b_j.$$
\par Note that, since $N_j, n_j$ in the previous constructions can be chosen arbitrarily large, we may consider them to be the same for the functions $g_1,g_2$.
\par Then 
$$f_1\in A(\mathbb D)\cap U(\{t_1\}),\qquad f_2\in A(\mathbb D)\cap U(\{t_2\}).$$
\par Also $S_n(f_1,t)\to f_1(t)$ for $t\neq t_1$ and $S_n(f_2,t)\to f_2(t)$ for $t\neq t_2$ and uniformly on every closed interval in $\mathbb T$ which 
does not contain $t_1,t_2$ and $S_n(f_1,t)\to f_1(t)$ and $S_n(f_2,t)\to f_2(t)$ for every $t$ as $n\to+\infty$ and $n\notin\bigcup_{j=1}^{+\infty}\mathcal 
B_j$.
\par Taking into account that the set $\{(a_j+f_2(t_1),b_j+f_1(t_2))\}$ is dense in $\mathbb C^2$ we get the following for the function $f=f_1+f_2$ or, more 
generally, for the function $f=f_1+\cdots+f_m$ when $K=\{t_1,\ldots,t_m\}$.
\begin{theorem}
Let $K=\{t_1,\ldots,t_m\}$ be a finite set in $\mathbb T$. For every $\epsilon_j>0$ with $\sum_{j=1}^{+\infty}\epsilon_j<+\infty$ there are blocks $\mathcal 
B_j$ in $\mathbb N$ such that $\mathcal B_1\prec\mathcal B_2\prec\ldots\,$ and corresponding polynomials $P_j$ with $\spec(P_j)\subseteq\mathcal B_j$ and 
$\|P_j\|<\epsilon_j$ so that the function $f=\sum_{j=1}^{+\infty}P_j$ has $\spec(f)\subseteq\bigcup_{j=1}^{+\infty}\mathcal B_j$ and the following 
properties:\newline
$(i)$ $f\in A(\mathbb D)\cap U(K)$,\newline
$(ii)$ $\|f\|\leq\epsilon=\sum_{j=1}^{+\infty}\epsilon_j$,\newline
$(iii)$ $S_n(f,t)\to f(t)$ for every $t\notin K$ and uniformly on every closed interval in $\mathbb T$ which does not intersect $K$.\newline
$(iv)$ $|S_k(P_j,t)|\leq\epsilon_j\sum_{l=1}^m\frac 1{|t-t_l|}$ for all $j,k$ and $t\neq t_1,\ldots,t_m$.\newline
$(v)$ $S_n(f,t)\to f(t)$ for every $t$ as $n\to+\infty$ and $n\notin\bigcup_{j=1}^{+\infty}\mathcal B_j$.
\end{theorem}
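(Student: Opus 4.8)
The plan is to run the single-point construction of this section $m$ times in parallel, once for each $t_l$, and superimpose the results. Fix a countable dense set $\{c_j=((c_j)_1,\dots,(c_j)_m)\mid j\in\mathbb N\}$ of $\mathbb C^m$ and split the budget as $\epsilon_j^{(l)}=\epsilon_j/m$. For each $l$ I would let $g_l$ be the function $g$ built above, but with the scalar sequence $((c_j)_l)_j$ in place of $(c_j)$ and $\epsilon_j^{(l)}$ in place of $\epsilon_j$, and then set $f_l(t)=g_l(t-t_l)$ and $f=f_1+\dots+f_m$. The one point to watch in the setup is that the integers $N_j>n_j$, and hence the blocks $\mathcal B_j$, must be chosen common to all $m$ constructions; this is possible because, for each fixed $j$, properties (2)--(5) for the $l$-th construction hold as soon as $n_j$, and then $N_j$, are large enough, so one picks $n_j$ large enough to serve every $l$ at once, then $N_j>n_j$ large enough to also satisfy the separation (6), proceeding inductively on $j$. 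Writing $P_j=P_j^{(1)}+\dots+P_j^{(m)}$, where $P_j^{(l)}=P_j(\cdot-t_l)$ is the $j$-th building block of $f_l$, one gets $\spec(P_j)\subseteq\mathcal B_j$, $\|P_j\|\le\sum_l\epsilon_j^{(l)}=\epsilon_j$, $f=\sum_jP_j$, and (since each $P_j$ has nonnegative frequencies) $f\in A(\mathbb D)$.

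Properties (ii)--(v) should then follow by reducing to the single-point case for each $g_l$. Part (ii) is just $\|f\|\le\sum_j\|P_j\|\le\sum_j\epsilon_j=\epsilon$; part (iv) comes from adding the $m$ estimates of type (11) for the translated blocks $P_j^{(l)}$, each with $|t-t_l|$ in the denominator and numerator $\epsilon_j/m\le\epsilon_j$. For (iii): by (4), (8) and (9) the coefficient sequence of each $g_l$ is of bounded variation, so by \cite{Z}, Ch.~I, Theorem~(2.6), $S_n(g_l,\cdot)\to g_l$ uniformly on closed arcs avoiding $0$; hence $S_n(f_l,\cdot)\to f_l$ uniformly on closed arcs avoiding $t_l$, and summing over $l$ gives (iii). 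For (v): when $n$ lies strictly between $\mathcal B_j$ and $\mathcal B_{j+1}$, the partial sum $S_n(f_l,\cdot)$ is exactly $\sum_{i\le j}P_i^{(l)}$, a partial sum of the uniformly convergent series $\sum_iP_i^{(l)}=f_l$; so $S_n(f_l,t)\to f_l(t)$ uniformly as $n\to+\infty$ with $n\notin\bigcup_j\mathcal B_j$, and summing gives (v).

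The heart of the matter is (i), and this is where the cross terms must be handled. Since a translation only multiplies Fourier coefficients by unimodular constants, $S_{3N_j}(f_l,t_l)=S_{3N_j}(g_l,0)=(c_j)_l$ by (10). For $k\ne l$ I would write $S_{3N_j}(f_k,t_l)=\sum_{i<j}P_i^{(k)}(t_l)+S_{3N_j}(P_j^{(k)},t_l)$, which is legitimate because the blocks $\mathcal B_i$ with $i<j$ lie entirely below $3N_j$ and those with $i>j$ entirely above it; as $j\to+\infty$ the first term tends to $f_k(t_l)$ (tail of the absolutely convergent series $\sum_iP_i^{(k)}$, since $\|P_i^{(k)}\|<\epsilon_i/m$) and the second tends to $0$ by the translated estimate (11), because $t_l\ne t_k$. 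Hence, for each $l$,
$$S_{3N_j}(f,t_l)-\Big((c_j)_l+\sum_{k\ne l}f_k(t_l)\Big)\longrightarrow 0\qquad\text{as }j\to+\infty.$$
Since $\big\{\big((c_j)_l+\sum_{k\ne l}f_k(t_l)\big)_{l=1}^m\mid j\in\mathbb N\big\}$ is a fixed translate of the dense set $\{c_j\}$, it is dense in $\mathbb C^m$; so given any continuous $g\colon K\to\mathbb C$, i.e.\ any $(A_1,\dots,A_m)\in\mathbb C^m$, I can choose a strictly increasing $(j_n)$ with $(c_{j_n})_l+\sum_{k\ne l}f_k(t_l)\to A_l$ for all $l$, and then $S_{3N_{j_n}}(f,t_l)\to A_l$ for all $l$, i.e.\ $\|S_{3N_{j_n}}(f,\cdot)-g\|_K\to0$. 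This gives $f\in A(\mathbb D)\cap U(K)$, hence (i).

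The main obstacle I anticipate is bookkeeping rather than a genuine difficulty: arranging a single sequence $(N_j,n_j)$ that simultaneously satisfies the finitely many ``large enough'' requirements per $j$ together with the inter-block separation (6), and making sure the cross-term corrections $\sum_{k\ne l}S_{3N_j}(f_k,t_l)$ really do converge, so that density in $\mathbb C^m$ can be invoked. The decay estimate (11), which forces $S_{3N_j}(P_j^{(k)},t_l)=O(\epsilon_j/|t_l-t_k|)\to0$, is the key input that makes those corrections settle down.
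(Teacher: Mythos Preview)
Your proposal is correct and follows essentially the same approach as the paper: translate the single-point construction to each $t_l$ with a common choice of $N_j,n_j$ (hence common blocks $\mathcal B_j$), add the resulting $f_l$, and use that the translate $\{((c_j)_l+\sum_{k\neq l}f_k(t_l))_l\}$ of the dense set $\{c_j\}$ is again dense in $\mathbb C^m$ to obtain universality. Your treatment of the cross terms $S_{3N_j}(f_k,t_l)$ via the block decomposition and estimate (11) is in fact more explicit than the paper's, which simply invokes the already-established convergence $S_n(f_k,t)\to f_k(t)$ for $t\neq t_k$; both arguments yield the same conclusion.
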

\par We note that the $\min\mathcal B_j$ can be taken arbitrarily large.
\par

\section{Pointwise universality on countably infinite $E\subseteq\mathbb T$ and convergence on $\mathbb T\setminus E$.}

\par Let $E=\{t_l\,|\,l\in\mathbb N\}\subseteq\mathbb T$ be a countably infinite set. We begin with the construction of a function $f\in A(\mathbb D)\cap
U_p(E)$.
\par Let $E_m=\{t_1,\ldots,t_m\}$. By Theorem 2.1 we have that for each $\epsilon_{m,j}>0$ with $\sum_{j=1}^{+\infty}\epsilon_{m,j}<+\infty$ there are blocks 
$\mathcal B_{m,j}$ in $\mathbb N$ such that $\mathcal B_{m,1}\prec\mathcal B_{m,2}\prec\ldots$ and corresponding polynomials $P_{m,j}$ with 
$\spec(P_{m,j})\subseteq\mathcal B_{m,j}$ and $\|P_{m,j}\|<\epsilon_{m,j}$ so that the 
function
$$f_m=\sum_{j=1}^{+\infty}P_{m,j}$$
belongs to $f\in A(\mathbb D)\cap U(E_m)$ and satisfies
\begin{equation}
 \|f_m\|\leq\epsilon_m=\sum_{j=1}^{+\infty}\epsilon_{m,j}
\end{equation}
and
\begin{equation}
 |S_k(P_{m,j},t)|\leq\epsilon_{m,j}\sum_{l=1}^m\frac 1{|t-t_l|}\qquad\text{for all}\,\,k,j\,\,\text{and}\,\,t\neq t_1,\ldots,t_m. 
\end{equation}
\par Since $\min\mathcal B_{m,j}$ can be taken arbitrarily large, we may take $\mathcal B_{m,j}$ in the following diagonal order:
$$\mathcal B_{1,1}\prec\mathcal B_{2,1}\prec\mathcal B_{1,2}\prec\mathcal B_{3,1}\prec\mathcal B_{2,2}\prec\mathcal B_{1,3}\prec\ldots\,.$$
We may also assume that
\begin{equation}
 \sum_{m=1}^{+\infty}\epsilon_m<+\infty.
\end{equation}
\par We set
$$f=\sum_{m=1}^{+\infty}f_m=\sum_{m=1}^{+\infty}\sum_{j=1}^{+\infty}P_{m,j}.$$
\par By (12), (14) it follows that $f\in A(\mathbb D)$. We now prove that $f$ is pointwise universal on $E$.
\begin{theorem}
Let $f$ be the function constructed above.\newline
(a) $f\in A(\mathbb D)\cap U_p(E)$.\newline
(b) For each $t\in\mathbb T\setminus E$ satisfying the condition
\begin{equation}
\max_j\epsilon_{m,j}\sum_{l=1}^m\frac 1{|t-t_l|}\to 0 
\end{equation}
we have
$$ S_n(f,t)\to f(t).$$
In particular, if $m\max_j\epsilon_{m,j}\to 0$ and the distance of $t$ from $E$ is positive, then $S_n(f,t)\to f(t)$.
\end{theorem}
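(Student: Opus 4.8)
For part (a) --- the inclusion $f\in A(\mathbb D)$ being already in hand --- the plan is a two-layer diagonal construction of $(k_n)$. Since $g$ is of Baire class $1$ on $E$, first fix continuous functions $\phi_i\colon E\to\mathbb C$ with $\phi_i(t)\to g(t)$ for every $t\in E$; this is possible precisely because $g$ is Baire-$1$, and it is the only use of that hypothesis. Now proceed inductively: having chosen $k_1<\dots<k_{n-1}$, look at the finite set $E_n$ and the function $\psi_n\colon E_n\to\mathbb C$ given by $\psi_n(t_l)=\phi_n(t_l)-\sum_{m\neq n}f_m(t_l)$ --- the series converges absolutely by (12) and (14), since $\sum_m\|f_m\|\le\sum_m\epsilon_m<+\infty$. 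Using $f_n\in U(E_n)$ together with property (v) of Theorem 2.1 for $f_n$ (outside the blocks $\mathcal B_{n,j}$ the partial sums of $f_n$ converge to $f_n$, so a realizing sequence for a non-trivial target must lie in those blocks), choose $k_n$ inside some block $\mathcal B_{n,j_n}$, with $j_n$ --- hence $k_n$ --- as large as we please, with $k_n>k_{n-1}$ and $|S_{k_n}(f_n,t_l)-\psi_n(t_l)|<1/n$ for all $l\le n$.

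It then remains to estimate $S_{k_n}(f,t_l)$ for $l\le n$. Because the blocks $\mathcal B_{m,j}$ are pairwise disjoint and linearly ordered and $k_n$ sits in a block of the $n$-th family, for $m\neq n$ we have $S_{k_n}(f_m,t_l)=\sum_{\mathcal B_{m,j}\prec\mathcal B_{n,j_n}}P_{m,j}(t_l)$, a partial sum of $\sum_j P_{m,j}$, so $|S_{k_n}(f_m,t_l)-f_m(t_l)|\le\sum_{\mathcal B_{m,j}\succ\mathcal B_{n,j_n}}\epsilon_{m,j}$; summing in $m$, $\bigl|\sum_{m\neq n}S_{k_n}(f_m,t_l)-\sum_{m\neq n}f_m(t_l)\bigr|\le\sum_{\mathcal B_{m,j}\succeq\mathcal B_{n,j_n}}\epsilon_{m,j}$, which is a tail of the convergent series $\sum_{m,j}\epsilon_{m,j}=\sum_m\epsilon_m$ (see (14)). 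This tail is $<1/n$ as soon as $\mathcal B_{n,j_n}$ is far enough out in the diagonal enumeration, so I would impose that too when selecting $k_n$. Combining the two bounds, $S_{k_n}(f,t_l)=\phi_n(t_l)+O(1/n)$ for all $l\le n$; fixing $l$ and letting $n\to\infty$, with $\phi_n(t_l)\to g(t_l)$, gives $S_{k_n}(f,t_l)\to g(t_l)$ for every $l$, i.e.\ $f\in U_p(E)$. I expect the delicate point to be exactly this coordination: the single block $\mathcal B_{n,j_n}$ must at once host a good approximation to the stage-dependent target $\psi_n$ for $f_n$ and lie far enough out to drown the interference from every other family --- both can be arranged since the admissible indices for $f_n$ form an infinite set while ``far enough out'' excludes only an initial segment of the enumeration.

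For part (b), fix $t\in\mathbb T\setminus E$ satisfying (15), and for each $n$ write $S_n(f,t)=\sum_{\max\mathcal B_{m,j}\le n}P_{m,j}(t)+S_n(P_{m^{\ast},j^{\ast}},t)$, where $\mathcal B_{m^{\ast},j^{\ast}}$ is the unique block straddling $n$ (the last term absent when $n$ falls in a gap). As $n\to\infty$ the first sum tends to $\sum_{m,j}P_{m,j}(t)=f(t)$, since $\sum_{m,j}\|P_{m,j}\|\le\sum_m\epsilon_m<+\infty$. For the boundary term, (13) gives $|S_n(P_{m^{\ast},j^{\ast}},t)|\le\epsilon_{m^{\ast},j^{\ast}}\sum_{l=1}^{m^{\ast}}\frac1{|t-t_l|}$, so it suffices to show that only finitely many blocks $\mathcal B_{m,j}$ satisfy $\epsilon_{m,j}\sum_{l=1}^{m}\frac1{|t-t_l|}\ge\delta$: by (15) this inequality fails for every $j$ once $m\ge M_0$; and for each of the finitely many $m<M_0$ the factor $\sum_{l\le m}\frac1{|t-t_l|}$ is a fixed finite number (this is where $t\notin E$ is used) while $\epsilon_{m,j}\to0$ as $j\to\infty$, so it fails for all but finitely many $j$. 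Since $n$ eventually lies past all such blocks, $S_n(P_{m^{\ast},j^{\ast}},t)\to0$, and hence $S_n(f,t)\to f(t)$. The ``in particular'' assertion is immediate, since $\mathrm{dist}(t,E)>0$ gives $\sum_{l=1}^m\frac1{|t-t_l|}\le m/\mathrm{dist}(t,E)$, so $m\max_j\epsilon_{m,j}\to0$ forces (15). The subtlety in (b) is that ``$\mathcal B_{m^{\ast},j^{\ast}}$ lies far out in the enumeration'' does not imply $m^{\ast}\to\infty$ --- the first family already produces blocks arbitrarily far out --- so one genuinely needs both clause (15) and the decay of $\epsilon_{m,j}$ in $j$ for each fixed $m$.
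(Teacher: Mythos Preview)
Your proof is correct in substance and follows essentially the same plan as the paper, with two differences worth flagging.

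In part (a), the paper dispenses with the Baire-1 hypothesis entirely: since $E$ is countable it takes an arbitrary $h:E\to\mathbb C$ and at stage $N$ uses $f_{m_N}$, with $m_N$ chosen so that $\sum_{m>m_N}\epsilon_m<\tfrac1{3N}$, to approximate $h(t_l)-\sum_{m<m_N}f_m(t_l)$ on $E_{m_N}$; the contribution of each $f_m$ with $m>m_N$ is bounded crudely by $\epsilon_m$. You instead use $f_n$ at stage $n$ and push the block index $j_n$ far out in the diagonal enumeration so that the tail $\sum_{\mathcal B_{m,j}\succ\mathcal B_{n,j_n}}\epsilon_{m,j}$ is small. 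Both schemes work, but yours has one loose end: you claim the realizing index $k_n$ can be taken inside some $\mathcal B_{n,j_n}$ with $j_n$ arbitrarily large, arguing that a ``non-trivial target'' forces this. The case $\psi_n=f_n|_{E_n}$ is not excluded (it occurs, for instance, whenever $\phi_n=f|_E$), and then the realizing sequence for universality may sit outside the blocks. The repair is immediate --- take $k_n=\max\mathcal B_{n,j_n}$, so that $|S_{k_n}(f_n,t_l)-f_n(t_l)|\le\sum_{j>j_n}\epsilon_{n,j}$ --- but it should be stated. Your continuous approximants $\phi_i$ are likewise superfluous, since you only ever evaluate $\phi_n$ on the finite set $E_n$.

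In part (b) your argument is actually more direct than the paper's. The paper first shows $(S_n(f,t))$ is Cauchy by bounding $|S_n(f,t)-S_m(f,t)|$ and splitting into cases according to whether the first index of the straddling block stays bounded or tends to infinity, and then separately identifies the limit as $f(t)$ by restricting to $n$ outside all blocks and invoking (v) of Theorem~2.1 together with the uniform bound $\sum_m|S_n(f_m,t)|\le\sum_m\epsilon_m$. Your single decomposition into completed blocks plus one boundary term reaches $f(t)$ in one step; the finiteness claim for the bad blocks is precisely the same two-case analysis (large $m$ handled by (15), fixed $m$ by $\epsilon_{m,j}\to 0$), just packaged more cleanly.
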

\begin{proof}
(a) Let $h:E\to\mathbb C$ be an arbitrary function. 
\par We first choose $m_1$ such that
$$\sum_{m=m_1+1}^{+\infty}\epsilon_m<\frac{\delta_1}3,$$
where $\delta_1=1$.
\par From the fact that the blocks $\mathcal B_{m,j}$ are mutually disjoint and from Theorem 2.1, we get 
\begin{equation}
\begin{split}
S_n(f_m,t_l)\to f_m(t_l)\qquad\text{as}\,\,n&\to+\infty,\,\,n\in\bigcup_{j=1}^{+\infty}\mathcal B_{m_1,j},\\
&1\leq l\leq m_1,\,\,1\leq m\leq m_1-1. 
\end{split}
\end{equation}
\par From (12) it follows that
\begin{equation}
\begin{split}
\sum_{m=m_1+1}^{+\infty}|S_n(f_m,t_l)|\leq\sum_{m=m_1+1}^{+\infty}&\sum_{j=1}^{+\infty}\epsilon_{m,j}<\frac{\delta_1}3,\\
&1\leq l\leq m_1,\,\,n\in\bigcup_{j=1}^{+\infty}\mathcal B_{m_1,j}. 
\end{split}
\end{equation}
\par Also, from the universality of $f_{m_1}$ on $E_{m_1}=\{t_1,\ldots,t_{m_1}\}$ and from (15) we get that there exists $n_1\in\bigcup_{j=1}^{+\infty}\mathcal 
B_{m_1,j}$ so that
\begin{equation}
\Big|S_{n_1}(f_{m_1},t_l)-\Big(h(t_l)-\sum_{m=1}^{m_1-1}f_m(t_l)\Big)\Big|<\frac{\delta_1}3,\qquad 1\leq l\leq m_1 
\end{equation}
\begin{equation}
\begin{split}
\big|S_{n_1}(f_m,t_l)-f_m(t_l)\big|<&\frac{\delta_1}{3(m_1-1)},\\
& 1\leq l\leq m_1,\,\,1\leq m\leq m_1-1. 
\end{split}
\end{equation}
\par Now we observe that the $n_1$-th Fourier sum of $f$ is a finite sum of $n_1$-th Fourier sums of the functions $f_1,f_2,\ldots,f_{m_1'}$ for some $m_1'\geq
m_1$. Hence
\begin{equation}
\begin{split}
S_{n_1}(f,t_l)&-h(t_l)\notag\\
&=\big(S_{n_1}(f_1,t_l)-f_1(t_l)\big)+\cdots+\big(S_{n_1}(f_{m_1-1},t_l)-f_{m_1-1}(t_l)\big)\notag\\
&\relphantom{=}{}+\Big(S_{n_1}(f_{m_1},t_l)-\Big(h(t_l)-\sum_{m=1}^{m_1-1}f_m(t_l)\Big)\Big)\notag\\
&\relphantom{=}{}+S_{n_1}(f_{m_1+1},t_l)+\cdots+S_{n_1}(f_{m_1'},t_l).
\end{split}
\end{equation}
\par Finally, from (17), (18), (19) we get
$$|S_{n_1}(f,t_l)-h(t_l)|<\delta_1,\qquad 1\leq l\leq m_1.$$
\par Similarly, for $\delta_2=\frac 12$ there exists $m_2>m_1$ such that $\sum_{m=m_2+1}^{+\infty}\epsilon_m<\frac{\delta_2}3$ and there exists $n_2>n_1$, 
$n_2\in\bigcup_{j=1}^{+\infty}\mathcal B_{m_2,j}$ such that
$$|S_{n_2}(f,t_l)-h(t_l)|<\delta_2,\qquad 1\leq l\leq m_2.$$
\par Continuing in this manner, we construct strictly increasing sequences of positive integers $(m_N)$, $(n_N)$ such that
$$|S_{n_N}(f,t_l)-h(t_l)|<\delta_N=\frac 1N,\qquad 1\leq l\leq m_N.$$
\par This implies that
$$S_{n_N}(f,t)\to h(t),\qquad t\in E $$
and the proof of pointwise universality is complete.\newline
(b) By our diagonal ordering of the blocks $\mathcal B_{m,j}$ every $n\in\mathbb N$ lies in some diagonal $\mathcal B_{k,1},\mathcal 
B_{k-1,2},\ldots,\mathcal B_{1,k}$. I.e. $\max\mathcal B_{1,k-1}<n\leq\max\mathcal B_{k,1}$ or $\max\mathcal B_{k-j+1,j}<n\leq\max\mathcal B_{k-j,j+1}$ 
for some $j=1,2,\ldots,k-1$. 
\par Hence by the definition of $f$ we have
\begin{equation}
\begin{split}
S_n(f,t)&=P_{1,1}(t)+P_{2,1}(t)+P_{1,2}(t)+\cdots\notag\\
&\relphantom{=}{}+P_{k,1}(t)+P_{k-1,2}(t)+\cdots+P_{k-j+1,j}(t)+S_n(P_{k-j,j+1},t)
\end{split}
\end{equation}
if $n\in\mathcal B_{k-j,j+1}$. Note that the final term in the last sum is missing in case $\max\mathcal B_{k-j+1,j}\leq n<\min\mathcal B_{k-j,j+1}$.
\par Similarly, if $m>n$ then $m$ lies in some diagonal $\mathcal B_{p,1},\mathcal B_{p-1,2},\ldots,\mathcal B_{1,p}$ with $p\geq k$.
\par Therefore,
\begin{equation}
\begin{split}
|S_n(f,t)-S_m(f,t)|&\leq|P_{k-j,j+1}(t)-S_n(P_{k-j,j+1},t)|\notag\\
&\relphantom{\leq}{}+\big(|P_{k-j-1,j+2}(t)|+\cdots+|P_{p-q+1,q}(t)|\big)\notag\\
&\relphantom{\leq}{}+|S_m(P_{p-q,q+1},t)|
\end{split}
\end{equation}
\par Now (13) implies
$$|S_m(P_{p-q,q+1},t)|\leq\epsilon_{p-q,q+1}\sum_{l=1}^{p-q}\frac 1{|t-t_l|}$$
and, together with (12), 
$$|P_{k-j,j+1}(t)-S_n(P_{k-j,j+1},t)|\leq\epsilon_{k-j,j+1}+\epsilon_{k-j,j+1}\sum_{l=1}^{k-j}\frac 1{|t-t_l|}.$$
\par Hence
\begin{equation}
\begin{split}
|S_n(f,t)-S_m(f,t)|&\leq\epsilon_{k-j,j+1}+\epsilon_{k-j,j+1}\sum_{l=1}^{k-j}\frac 1{|t-t_l|}\notag\\
&\relphantom{\leq}{}+\epsilon_{k-j-1,j+2}+\cdots+\epsilon_{p-q+1,q}\notag\\
&\relphantom{\leq}{}+\epsilon_{p-q,q+1}\sum_{l=1}^{p-q}\frac 1{|t-t_l|}
\end{split}
\end{equation}
\par If $n,m\to+\infty$ then $k,p\to+\infty$ and since the double series $\sum_{r,s}\epsilon_{r,s}$ converges, we get 
$\epsilon_{k-j,j+1}+\epsilon_{k-j-1,j+2}+\cdots+\epsilon_{p-q+1,q}\to 0$.  
\par Regarding the term $r=\epsilon_{k-j,j+1}\sum_{l=1}^{k-j}\frac 1{|t-t_l|}$ we have two cases. If $k-j$ is bounded then $j+1\to+\infty$ and thus $r\to 0$. 
If $k-j\to+\infty$ then (15) implies $r\to 0$. Similarly, $\epsilon_{p-q,q+1}\sum_{l=1}^{p-q}\frac 1{|t-t_l|}\to 0$.
\par We conclude that $(S_n(f,t))$ converges. The proof will be complete if we show that $S_n(f,t)\to f(t)$ when $n\to+\infty$ and does not belong to any of 
the blocks $\mathcal B_{m,j}$. Indeed, in this case $(v)$ of Theorem 2.1 implies
$$S_n(f,t)=\sum_{m=1}^{+\infty}S_n(f_m,t)\to\sum_{m=1}^{+\infty}f_m(t)=f(t)$$
due to the uniform bound $\sum_{m=1}^{+\infty}|S_n(f_m,t)|\leq\sum_{m=1}^{+\infty}\epsilon_m$. 
\end{proof}
\par As an application we get the following for the function $f$ of Theorem 3.1.
\begin{theorem}
Let $(\delta_m)$ be a decreasing sequence of positive numbers such that $\sum_{m=1}^{+\infty}\delta_m^a<+\infty$ for all $a>0$. Now, if 
$\max_j\epsilon_{m,j}\sum_{l=1}^m\frac 1{\delta_l}\to 0$, then we have $S_n(f,t)\to f(t)$ outside a set of Hausdorff dimension zero.
\end{theorem}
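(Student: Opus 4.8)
The plan is to show that, with the $\epsilon_{m,j}$ chosen as in the hypothesis, the set of $t$ at which $S_n(f,t)$ fails to converge to $f(t)$ is contained in $E$ together with a $\limsup$ of short arcs, and then to bound the Hausdorff dimension of that $\limsup$ by a standard covering estimate. The engine is part $(b)$ of Theorem 3.1: at every $t\in\mathbb T\setminus E$ for which $\max_j\epsilon_{m,j}\sum_{l=1}^m\frac 1{|t-t_l|}\to 0$ we already know $S_n(f,t)\to f(t)$, so it suffices to produce a set $Z\subseteq\mathbb T$ with $\dim_H Z=0$ that contains $E$ and every $t\notin E$ violating that condition.

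First I would record a comparison. Put $I_l=\{t\in\mathbb T:|t-t_l|<\delta_l\}$ and suppose $t\in\mathbb T\setminus E$ lies in only finitely many of the $I_l$, say $t\notin I_l$ for $l>L$. Then for $m>L$
$$\sum_{l=1}^m\frac 1{|t-t_l|}\le C_t+\sum_{l=L+1}^m\frac 1{\delta_l}\le C_t+\sum_{l=1}^m\frac 1{\delta_l},\qquad\text{where}\quad C_t:=\sum_{l=1}^L\frac 1{|t-t_l|}<+\infty,$$
using $|t-t_l|\ge\delta_l$ for $L<l\le m$. Since $\max_j\epsilon_{m,j}\le\epsilon_m\to 0$ (because $\sum_m\epsilon_m<+\infty$) and, by hypothesis, $\max_j\epsilon_{m,j}\sum_{l=1}^m\frac 1{\delta_l}\to 0$, multiplying the inequality through by $\max_j\epsilon_{m,j}$ yields $\max_j\epsilon_{m,j}\sum_{l=1}^m\frac 1{|t-t_l|}\to 0$. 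Hence $S_n(f,t)\to f(t)$ for all such $t$ by Theorem 3.1$(b)$, and so the exceptional set is contained in $Z:=E\cup V$, where $V=\limsup_l I_l=\bigcap_{L\ge 1}\bigcup_{l\ge L}I_l$ is the set of points belonging to infinitely many $I_l$.

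It remains to see $\dim_H Z=0$. Since $E$ is countable, $\dim_H E=0$. Fix $s>0$. By hypothesis (with $a=s$) $\sum_l\delta_l^s<+\infty$, so in particular $\delta_l\to 0$; hence for each $\rho>0$ there is $L(\rho)$, with $L(\rho)\to+\infty$ as $\rho\to 0$, such that $2\delta_l<\rho$ for all $l\ge L(\rho)$. The arcs $\{I_l:l\ge L(\rho)\}$ cover $V$, have diameters $2\delta_l<\rho$, and satisfy $\sum_{l\ge L(\rho)}(2\delta_l)^s=2^s\sum_{l\ge L(\rho)}\delta_l^s$; the last quantity, a tail of the convergent series $\sum_l\delta_l^s$, tends to $0$ as $\rho\to 0$. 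Therefore $\mathcal H^s(V)=0$ for every $s>0$, so $\dim_H V=0$, and $\dim_H Z=\max\{\dim_H E,\dim_H V\}=0$, which is exactly what is required.

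The computations are routine; the one substantive point is the reduction in the second paragraph. What makes the theorem work is that Theorem 3.1$(b)$ only requires $|t-t_l|\ge\delta_l$ for all \emph{large} $l$, which confines the bad set to $\limsup_l I_l$ rather than to $\bigcup_l I_l$ --- the latter contains entire arcs and has Hausdorff dimension $1$, so no estimate for it could ever give the conclusion. Granted that reduction, the dimension bound for the $\limsup$ is just the Hausdorff-measure analogue of the Borel--Cantelli lemma, here trivialized by the assumption $\sum_l\delta_l^a<+\infty$ for \emph{every} $a>0$. (It is also worth noting that $\epsilon_{m,j}$ with $\sum_m\epsilon_m<+\infty$ and $\max_j\epsilon_{m,j}\sum_{l=1}^m\frac 1{\delta_l}\to 0$ can always be arranged --- e.g.\ $\epsilon_{m,j}=2^{-j}\epsilon_m$ with $\epsilon_m$ decaying quickly enough --- so that a function $f$ as in Theorem 3.1 with the stated additional property genuinely exists.)
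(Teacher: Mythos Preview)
Your proof is correct and follows essentially the same route as the paper's: both define the same $\limsup$ set $V=\bigcap_L\bigcup_{l\ge L}I_l$ (the paper calls it $D$), split $\sum_{l=1}^m\frac 1{|t-t_l|}$ into a finite head plus a tail bounded by $\sum_{l}\frac 1{\delta_l}$, and invoke Theorem~3.1(b). Your write-up is slightly more explicit in justifying $\max_j\epsilon_{m,j}\to 0$ and in spelling out the Borel--Cantelli-type estimate for $\dim_H V=0$, which the paper simply asserts.
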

\begin{proof}
We consider
$$I_m=(t_m-\delta_m,t_m+\delta_m),\qquad D=\bigcap_{m=1}^{+\infty}\bigcup_{l=m}^{+\infty}I_l.$$
\par Since $\sum_{m=1}^{+\infty}\delta_m^a<+\infty$ for all $a>0$, the set $D$ is of Hausdorff dimension zero.
\par Now, if $t$ is not in the countable set $E$ neither in $D$, then it satisfies (15). Indeed, let $t\notin E$ and $t\notin\bigcup_{l=m_0}^{+\infty}I_l$ for 
some $m_0$. Then if $m\geq m_0$ we have
$$\max_j\epsilon_{m,j}\sum_{l=1}^m\frac 1{|t-t_l|}\leq\max_j\epsilon_{m,j}\sum_{l=1}^{m_0-1}\frac 1{|t-t_l|}+\max_j\epsilon_{m,j}\sum_{l=m_0}^m\frac 
1{|t-t_l|}.$$
\par The first term of the right side obviously tends to $0$ as $m\to+\infty$. As for the second term we have
$$\max_j\epsilon_{m,j}\sum_{l=m_0}^m\frac 1{|t-t_l|}\leq\max_j\epsilon_{m,j}\sum_{l=m_0}^m\frac 1{\delta_l}\leq\max_j\epsilon_{m,j}\sum_{l=1}^m\frac 
1{\delta_l}$$
which by our hypothesis tends to $0$ as $m\to+\infty$.
\end{proof}
\par Note that, if $E$ is dense in $\mathbb T$, the set of divergence of $S_n(f,t)$ is necessarily $G_{\delta}$ and dense and hence uncountable, although by 
the proper choice of $f$ it can be of Hausdorff dimension zero.

\section{Subsets of $\mathbb T$ not accepting uniform universality.}

\par Let $K$ be a compact subset of $\mathbb T$, $K\neq\mathbb T$. In Theorem 2.2 of \cite{KNP} and in \cite{N} it was shown that $H(\mathbb D)\cap 
U(K)$ is a dense-$G_{\delta}$ subset of $H(\mathbb D)$ when the latter has the topology of uniform convergence on compacta.
\par We now prove a special case of Theorem 4.3 of \cite{KNP} replacing the condition appearing there with a much simpler one (number (20) in what follows).
\begin{proposition}
(a) Let $K$ be a compact proper subset of $\mathbb T$ and $t_0\in K$ with the following property:
\begin{equation}
\begin{split}
&\text{for each infinite}\,\,\mathbb M\subseteq\mathbb N\,\,\text{there are}\,\,a,b\,\,\text{with}\,\,0<a<b<2\pi\\
&\text{and}\,\,[a/m,b/m]\cap(K-t_0)\neq\emptyset\,\,\text{for infinitely many}\,\,m\in\mathbb M.
\end{split}
\end{equation}
If $f\in H(\mathbb D)\cap U(K)$, then the Taylor series of $f$ is not (C,1) summable at $t_0$.\newline
(b) Now let $K$ and $t_0$ be as in (a) and moreover let $K$ be symmetric with respect to $t_0$, i.e. $2t_0-t\in K$ for every $t\in K$.\newline
If $f\in L^1(\mathbb T)\cap U(K)$, then the Fourier series of $f$ is not (C,1) summable at $t_0$.  
\end{proposition}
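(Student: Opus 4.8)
The plan is to reduce both parts to the following elementary estimate. \emph{Key estimate:} if $(c_k)_{k\ge0}$ is a complex sequence, $s_j=\sum_{k=0}^{j}c_k$, and $\frac1{m+1}\sum_{j=0}^{m}s_j\to0$, then for every sequence $(\theta_m)$ with $\sup_m m|\theta_m|<\infty$ one has $\sum_{k=0}^{m}c_k e^{ik\theta_m}=e^{im\theta_m}s_m+o(1)$ as $m\to\infty$, uniformly over $|\theta_m|\le B/m$ for fixed $B$. I would prove this by summation by parts twice: first $\sum_{k=0}^{m}c_ke^{ik\theta}=s_me^{im\theta}-(e^{i\theta}-1)\sum_{k=0}^{m-1}s_ke^{ik\theta}$; then, writing $\Sigma_k=\sum_{l=0}^{k}s_l$ (so $\Sigma_k=o(k)$ by hypothesis), $\sum_{k=0}^{m-1}s_ke^{ik\theta}=\Sigma_{m-1}e^{i(m-1)\theta}-(e^{i\theta}-1)\sum_{k=0}^{m-2}\Sigma_ke^{ik\theta}$, so that $\big|\sum_{k=0}^{m-1}s_ke^{ik\theta}\big|\le|\Sigma_{m-1}|+|\theta|\sum_{k=0}^{m-2}|\Sigma_k|=o(m)+\tfrac{B}{m}\,o(m^2)=o(m)$ for $|\theta|\le B/m$; multiplying by $|e^{i\theta}-1|\le|\theta|\le B/m$ gives the claim. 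Also, in both parts I may subtract from $f$ the value $L$ to which the series is $(C,1)$ summable at $t_0$: this keeps $f$ in $H(\mathbb D)$ (resp. $L^1(\mathbb T)$) and in $U(K)$ (because $g\mapsto g+L$ permutes the continuous functions on $K$), so I may assume $\sigma_m(f,t_0)\to0$; in (b) I also translate so that $t_0=0$ and hence $K=-K$.

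For (a): fix $M\ne0$ and, by universality, a strictly increasing $(k_n)$ with $\|S_{k_n}(f,\cdot)-M\|_K\to0$. Apply (20) to $\mathbb M=\{k_n\}$ to obtain $0<a<b<2\pi$, an infinite $\mathbb M'\subseteq\mathbb M$, and points $\tau_m\in K$ for $m\in\mathbb M'$ with $s_m:=m(\tau_m-t_0)\in[a,b]$. The key estimate, with $c_k=\widehat f(k)e^{ikt_0}$ (so $s_j=S_j(f,t_0)$ and the hypothesis is exactly $\sigma_m(f,t_0)\to0$) and $\theta_m=\tau_m-t_0$, gives $S_m(f,\tau_m)=e^{is_m}S_m(f,t_0)+o(1)$ as $m\to\infty$ through $\mathbb M'$. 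Since $t_0,\tau_m\in K$, both $S_m(f,t_0)$ and $S_m(f,\tau_m)$ tend to $M$ along $\mathbb M'$, so $M(1-e^{is_m})\to0$; but $|1-e^{is_m}|=2|\sin(s_m/2)|$ is bounded below by a positive constant on the compact set $[a,b]\subset(0,2\pi)$, and $M\ne0$, a contradiction.

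For (b): with $t_0=0$ and $K=-K$, put $\gamma_0=\widehat f(0)$ and $\gamma_k=\widehat f(k)+\widehat f(-k)$ for $k\ge1$, so that $\sum_{k=0}^{j}\gamma_k=S_j(f,0)$ and $\sum_{k=0}^{j}\gamma_k(e^{ik\tau}+e^{-ik\tau})=S_j(f,\tau)+S_j(f,-\tau)$ for every $\tau$. Choose $M\ne0$, pick $(k_n)$ with $\|S_{k_n}(f,\cdot)-M\|_K\to0$, and get $a,b,\mathbb M',\tau_m$ from (20) as above; by symmetry $-\tau_m\in K$ as well. Apply the key estimate to $(\gamma_k)$ — for which the running sums are $S_j(f,0)$, so the hypothesis is $\sigma_m(f,0)\to0$ — once with $\theta_m=\tau_m$ and once with $\theta_m=-\tau_m$; adding the two relations and dividing by $2$ yields $\tfrac12\big(S_m(f,\tau_m)+S_m(f,-\tau_m)\big)=\cos(s_m)\,S_m(f,0)+o(1)$ along $\mathbb M'$. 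Since $0,\pm\tau_m\in K$, all three partial sums tend to $M$, so $M(1-\cos s_m)\to0$; but $1-\cos s_m=2\sin^2(s_m/2)$ is bounded below by a positive constant on $[a,b]\subset(0,2\pi)$, and $M\ne0$, a contradiction.

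The main obstacle is the key estimate itself: recognizing that $(C,1)$ summability to $0$ at $t_0$ — which is nothing but $\sum_{j\le m}S_j(f,t_0)=o(m)$ — already forces $S_m(f,\cdot)$ on the scale $1/m$ about $t_0$ to be a near-rotation of $S_m(f,t_0)$, and that this is incompatible with universality, which makes $S_m(f,\cdot)$ nearly constant on $K$ along the chosen subsequence. Two points need care: the reduction to summability value $0$ is essential, since for $L\ne0$ one has $\Sigma_{m-1}\approx mL=O(m)$, only, and the estimate degenerates; and in (b) the symmetry of $K$ is precisely what converts the unimodular factor $e^{is_m}$ (which can fail to separate $M$ from $M$ only if $s_m$ is near $0\pmod{2\pi}$) into the real factor $\cos s_m<1$, so that one nonzero target already forces the contradiction.
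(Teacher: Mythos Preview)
Your proof is correct and follows essentially the same line as the paper's: both hinge on the Tauberian relation $S_m(f,t_0+\theta_m)=e^{im\theta_m}(S_m(f,t_0)-s)+s+o(1)$ (resp.\ its symmetrized cosine version) when $m\theta_m\in[a,b]$ and the series is $(C,1)$ summable to $s$ at $t_0$, combined with universality driving both sides to a common value $h(t_0)\neq s$. The only difference is presentational: the paper cites Zygmund's Theorems III.(12.16) and III.(12.9) for this relation and then chooses $h(t_0)\neq s$, whereas you first normalize $s=0$ by subtracting the constant $L$, re-derive the estimate from scratch via two Abel summations, and take the constant target $h\equiv M\neq 0$.
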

\begin{proof}
(a) We denote by $\sum_{k=0}^{+\infty}c_kz^k$ the Taylor series of $f$ and by $S_n(f,t)=\sum_{k=0}^nc_ke^{ikt}$ the partial sums on $\mathbb T$ and let us 
assume that the Taylor series is (C,1) summable at $t_0$.
\par Since $f\in U(K)$, for every $h\in C(K)$ there is a subsequence $S_{n_j}(f,t)$ which converges to $h$ uniformly on $K$. Property (20) implies that there 
is a subsequence of $(n_j)$, which without loss of generality we may assume equal to $(n_j)$, and corresponding $\theta_j\in K-t_0$ such that $a\leq 
n_j\theta_j\leq b$ for all $j$.
\par Then Theorem (12.16) of \cite{Z}, Vol I, Ch III implies
$$S_{n_j}(f,t_0+\theta_j)-(S_{n_j}(f,t_0)-s)e^{in_j\theta_j}\to s,$$
where $s$ is the (C,1) sum of the Taylor series of $f$ at $t_0$.
\par Taking a further subsequence of $n_j$, we may assume that $n_j\theta_j\to\phi$ for some $\phi\in[a,b]$. Now the uniform convergence $S_{n_j}(f,t)\to 
h(t)$ on $K$ implies
$$h(t_0)-(h(t_0)-s)e^{i\phi}=s.$$
Choosing $h(t_0)\neq s$ we arrive at a contradiction.\newline
(b) The proof is the same as before. If $S_n(f,t)=\sum_{k=-n}^n\widehat f(k)e^{ikt}$ are the partial sums of the Fourier series of $f$, we assume that their 
(C,1) means at $t_0$ converge to some $s$. We consider $h\in C(K)$, the sequence $S_{n_j}(f,t)$ converging to $h$ uniformly on $K$ and the corresponding 
$\theta_j$ so that $\pm\theta_j\in K-t_0$ and $a\leq n_j\theta_j\leq b$ for all $j$.
\par We now apply Theorem (12.9) of \cite{Z}, Vol I, Ch III and get
$$\frac 12\big(S_{n_j}(f,t_0+\theta_j)+S_{n_j}(f,t_0-\theta_j)\big)-(S_{n_j}(f,t_0)-s)\cos n_j\theta_j\to s.$$
Exactly as before we get $h(t_0)-(h(t_0)-s)\cos\phi=s$ and we arrive at a contradiction choosing $h(t_0)\neq s$.
\end{proof}
\par We note that property (20) can be stated with ``left hand'' intervals $[t_0-(b/m),t_0-(a/m)]$ and the result of Proposition 4.1(a) remains unchanged.
\begin{corollary}
If $K$ is a compact proper subset of $\mathbb T$ which satifies property (20) for some $t_0\in K$ then $A(\mathbb D)\cap U(K)=\emptyset$. If moreover 
$K$ is symmetric with respect to $t_0$ then $C(\mathbb T)\cap U(K)=\emptyset$.\newline
In particular the one third Cantor set $C\subseteq[0,\pi]$ satisfies property (20) for $t_0=0\in C$ and hence $A(\mathbb D)\cap U(C)=\emptyset$. For the 
symmetric Cantor set $C^*=C\cup(-C)\subseteq[-\pi,\pi]$ we have that $C(\mathbb T)\cap U(C^*)=\emptyset$. 
\end{corollary}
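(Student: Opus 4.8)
The plan is to obtain the first sentence of the corollary as an immediate consequence of Proposition 4.1 together with Fej\'er's theorem, and then to verify property (20) for the Cantor set directly from its self-similar structure.

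First I would prove the two implications in the general statement. Suppose $f\in A(\mathbb D)\cap U(K)$. On $\mathbb T$ the Fourier series of $f$ is its Taylor series, since $\widehat f(k)=0$ for $k<0$, so $f$, viewed in $H(\mathbb D)$, belongs to $H(\mathbb D)\cap U(K)$, and Proposition 4.1(a) gives that the Taylor series of $f$ is not $(C,1)$ summable at $t_0$. But $f\in C(\mathbb T)$, so by Fej\'er's theorem the $(C,1)$ means of the Fourier series of $f$ converge to $f(t_0)$ at $t_0$; this contradiction shows $A(\mathbb D)\cap U(K)=\emptyset$. If in addition $K$ is symmetric about $t_0$ and $f\in C(\mathbb T)\cap U(K)$, then $f\in L^1(\mathbb T)\cap U(K)$ and Proposition 4.1(b) says the Fourier series of $f$ is not $(C,1)$ summable at $t_0$, again contradicting Fej\'er; hence $C(\mathbb T)\cap U(K)=\emptyset$.

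It remains to verify property (20). For $C$ I take $t_0=0\in C$, so $K-t_0=C$; for $C^*=C\cup(-C)$ I again take $t_0=0$, note that $C^*-t_0=C^*\supseteq C$ and that $C^*$ is symmetric about $0$, so (20) for $C^*$ follows once it is known for $C$, and then both assertions come from the first part. To prove (20) for $C$ I would in fact establish the stronger fact that there are $0<a<b<2\pi$ with $[a/m,b/m]\cap C\neq\emptyset$ for \emph{every} large $m$. Working with the standard middle--thirds set in $[0,1]$ (the normalization in $[0,\pi]$ only multiplies $a,b$ by $\pi$), I use two facts: $(i)$ $C\cap[0,3^{-n}]=3^{-n}C$ for all $n$, by iterating $C=\frac13C\cup(\frac23+\frac13C)$; and $(ii)$ every complementary gap $(\ell,\ell+\delta)$ of $C$ satisfies $\ell\geq\delta$, since it is the open middle third of a basic interval $[x,x+3\delta]$ with $x\geq0$, so $\ell=x+\delta\geq\delta$. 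Choose $a=\frac12$, $b=1$. Given a large $m$, let $n$ satisfy $3^n\leq m<3^{n+1}$ and put $\mu=3^n/m\in(\frac13,1]$; then $3^{-n}\mu=1/m$, so $3^{-n}\big[\frac\mu2,\mu\big]=\big[\frac1{2m},\frac1m\big]=\big[\frac am,\frac bm\big]$. The interval $\big[\frac\mu2,\mu\big]$ lies in $[0,1]$ and cannot be contained in a single gap of $C$: if $(\ell,\ell+\delta)$ were such a gap, then $\ell<\frac\mu2$ and $\mu<\ell+\delta$, forcing $\frac\mu2<\delta\leq\ell<\frac\mu2$, which is absurd. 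Since $[0,1]\setminus C$ is a disjoint union of open intervals, a subinterval of $[0,1]$ lying in no single gap must meet $C$; pick $c\in C\cap[\frac\mu2,\mu]$. By $(i)$, $3^{-n}c\in 3^{-n}C=C\cap[0,3^{-n}]\subseteq C$, and $3^{-n}c\in[\frac am,\frac bm]$, so $[\frac am,\frac bm]\cap C\neq\emptyset$. This proves (20) for $C$ (with $a=\frac\pi2$, $b=\pi$ in the $[0,\pi]$ normalization), and the corollary follows.

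The reductions to Proposition 4.1 and the use of Fej\'er's theorem are harmless; the one real point is the geometric step. The difficulty there is that $m$ ranges over an \emph{arbitrary} infinite set, not merely the powers of $3$, so the crude self-similarity $3^kC\supseteq C$ does not suffice. Facts $(i)$ and $(ii)$ are exactly what repair this: $(i)$ lets one rescale the tiny interval $[a/m,b/m]$ up to an interval $[a\mu,b\mu]$ with $\mu\in(\frac13,1]$, and $(ii)$ together with the choice $b=2a$ prevents $[a\mu,b\mu]$ from ever slipping inside a complementary gap of $C$. I expect the verification of $(ii)$ and the gap argument to be the part demanding the most care, though both are elementary.
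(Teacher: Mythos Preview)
Your reduction of the general statement to Proposition~4.1 via Fej\'er's theorem is exactly what the paper does (it says simply that the Fourier series of any $f\in A(\mathbb D)$ is $(C,1)$ summable at every point of $\mathbb T$, so the first statement is obvious). Your argument is correct.

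For the Cantor set verification you take a genuinely different, though still elementary, route. The paper proves the same stronger claim---that $[\tfrac an,\tfrac bn]\cap C\neq\emptyset$ for \emph{all} large $n$ once $b/a>2$---but does so in one line: choose $N$ with $3^N\le n/b<3^{N+1}$; then the ratio condition $(b/n)/(a/n)>2$ forces $(\tfrac an,\tfrac bn)$ to contain at least one of the explicit Cantor points $3^{-(N+1)}$ or $2\cdot 3^{-(N+1)}$. You instead rescale the question by $3^n$ via the self-similarity $C\cap[0,3^{-n}]=3^{-n}C$ and then use the gap estimate $\ell\ge\delta$ to show $[\tfrac\mu2,\mu]$ cannot fit in any complementary interval of $C$. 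Both work; the paper's argument is shorter and pinpoints the actual witness, while yours isolates two structural features (self-similarity and the gap-to-left-endpoint inequality) that would transfer more readily to other self-similar sets.
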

\begin{proof}
Since the Fourier series of any $f\in A(\mathbb D)$ is (C,1) summable at every point of $\mathbb T$, the first statement is obvious.
\par Regarding the Cantor set $C$ and $t_0=0\in C$ we shall prove a stronger version of property (20): if $0<a<b<2\pi$ and $\frac ba>2$, then 
$[\frac an,\frac bn]\cap K\neq\emptyset$ for every $n\in\mathbb N$, $n\geq 3b$. 
\par We take $N\in\mathbb N$ so that $3^N\leq\frac nb<3^{N+1}$ and then $(\frac bn)/(\frac an)>2$ implies that the interval $(\frac an,\frac bn)$ contains at 
least one of the points $\frac 1{3^{N+1}}$ and $\frac 2{3^{N+1}}$ of $C$.
\end{proof}
\par In fact it is not very difficult to see that the one third Cantor set satisfies property (20) for every $t_0\in C$. 
\par The question whether $C(\mathbb T)\cap U(C)=\emptyset$ is true is only slightly more complicated. The one third Cantor set $C\subseteq[0,\pi]$ is not 
symmetric with respect to $0$ and in fact it is not symmetric with respect to any of its points. Nevertheless $C(\mathbb T)\cap U(C)=\emptyset$ is true.
\begin{proposition}
Let $K$ be a compact proper subset of $\mathbb T$ and $t_0\in K$ with the following property:
\begin{equation}
\begin{split}
&\text{for each infinite}\,\,\mathbb M\subseteq\mathbb N\,\,\text{there are}\,\,t_m\in K\,\,\text{for all}\,\,m\in\mathbb M\\
&\text{so that}\,\,t_m\to t_0\,\,\text{and there are}\,\,a,b\,\,\text{with}\,\,0<a<b<2\pi\\
&\text{and}\,\,[a/m,b/m]\cap(K-t_m)\cap(t_m-K)\neq\emptyset\\
&\text{for infinitely many}\,\,m\in\mathbb M.
\end{split}
\end{equation}
If $f\in L^1(\mathbb T)\cap U(K)$, then the (C,1) means of the Fourier series of $f$ do not converge uniformly on $K$.   
\end{proposition}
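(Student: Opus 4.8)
The plan is to argue by contradiction, imitating the proof of Proposition 4.1(b) but with the single symmetry centre $t_0$ replaced by the moving near‑symmetry centres produced by (21).

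So suppose the $(C,1)$ means $\sigma_n(f,t)=\frac1{n+1}\sum_{k=0}^nS_k(f,t)$ converge uniformly on $K$ to a function $\phi$. Then $\phi\in C(K)$ and, since $t_0\in K$, the Fourier series of $f$ is $(C,1)$ summable at $t_0$ to $s:=\phi(t_0)$. Fix an arbitrary $h\in C(K)$. Since $f\in U(K)$ there is a strictly increasing $(k_n)$ with $\|S_{k_n}(f,\cdot)-h\|_K\to0$. Applying (21) to $\mathbb M=\{k_n:n\in\mathbb N\}$ we get $x_n:=t_{k_n}\in K$ with $x_n\to t_0$ and, after passing to a subsequence (still written $(k_n)$), numbers $\rho_n$ with $a\le k_n\rho_n\le b$ and $x_n+\rho_n,\,x_n-\rho_n\in K$; passing to a further subsequence we may assume $k_n\rho_n\to\beta$ with $a\le\beta\le b$, so $\cos(k_n\rho_n)\to\cos\beta\neq1$.

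The engine of the argument is a version of Theorem (12.9) of \cite{Z}, Vol.~I, Ch.~III that is uniform over base points lying in $K$ and tending to $t_0$, namely
$$\tfrac12\big(S_{k_n}(f,x_n+\rho_n)+S_{k_n}(f,x_n-\rho_n)\big)-\big(S_{k_n}(f,x_n)-\phi(x_n)\big)\cos(k_n\rho_n)-\phi(x_n)\to0.$$
I expect this to be the only non‑routine point. To obtain it I would run Zygmund's proof of (12.9) keeping track of the base point: writing $\phi_x(t)=\tfrac12[f(x+t)+f(x-t)]$ and $\Phi_x(v)=\int_0^v[\phi_x(u)-\phi(x)]\,du$, one expresses $S_{k_n}(f,x_n\pm\rho_n)-\phi(x_n)$ and $S_{k_n}(f,x_n)-\phi(x_n)$ as integrals of $\phi_{x_n}-\phi(x_n)$ against the Dirichlet kernel and its translates by $\pm\rho_n$; the cancellation $D_N(u-\rho)+D_N(u+\rho)\approx2\cos(N\rho)D_N(u)$ for $u\gg\rho$, together with an integration by parts, bounds the error by a constant depending only on $b\ge k_n\rho_n$ times $\sup_{0<v\le\delta}|\Phi_{x_n}(v)|/v$, plus a term that for each fixed $\delta>0$ tends to $0$. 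Since $\Phi_x(v)/v=\frac1{2v}\int_{x-v}^{x+v}f-\phi(x)$, what is needed is exactly that the symmetric averages $v\mapsto\frac1{2v}\int_{x-v}^{x+v}f$ converge to $\phi$ uniformly on $K$; and this follows from the standing assumption that the $(C,1)$ means converge to $\phi$ uniformly on $K$, the passage between the two being a standard Fejér‑kernel estimate, after which $\phi(x_n)\to\phi(t_0)$ completes the point. The main obstacle, then, is isolating this uniform‑in‑the‑base‑point form of (12.9) and matching its error term exactly against the uniform $(C,1)$ hypothesis on $K$.

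Granting the displayed limit, let $n\to\infty$. From $\|S_{k_n}(f,\cdot)-h\|_K\to0$, $x_n\to t_0$, $\rho_n\to0$, $x_n,x_n\pm\rho_n\in K$ and the continuity of $h$ at $t_0$ we get $S_{k_n}(f,x_n)\to h(t_0)$ and $S_{k_n}(f,x_n\pm\rho_n)\to h(t_0)$; from the uniform convergence of the $(C,1)$ means and the continuity of $\phi$ we get $\phi(x_n)\to\phi(t_0)=s$. Hence the displayed limit gives $h(t_0)-(h(t_0)-s)\cos\beta-s=0$, that is $(h(t_0)-s)(1-\cos\beta)=0$, and since $\cos\beta\neq1$ we conclude $h(t_0)=s$. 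As $h\in C(K)$ was arbitrary this is absurd (take $h$ with $h(t_0)\neq\phi(t_0)$), and this contradiction proves that the $(C,1)$ means of the Fourier series of $f$ cannot converge uniformly on $K$.
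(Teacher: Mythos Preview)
Your argument is correct and follows essentially the same route as the paper: contradiction, uniform universality to pick $h$ and a subsequence, property (21) to produce moving centres $x_n\to t_0$ with $x_n\pm\rho_n\in K$ and $k_n\rho_n\in[a,b]$, and then a uniform-in-the-base-point version of Zygmund's Theorem (12.9) to force $(h(t_0)-s)(1-\cos\beta)=0$. The only difference is in how that uniform version is justified: the paper simply invokes Zygmund's remark that when the hypotheses (here, $(C,1)$ summability) hold uniformly in a parameter the conclusion does too, whereas you sketch a direct proof via $\Phi_{x_n}(v)/v$ and a reduction to uniform convergence of symmetric averages---a detour that is not needed and whose last step (uniform $(C,1)$ summability $\Rightarrow$ uniform convergence of $\tfrac{1}{2v}\int_{x-v}^{x+v}f$) is not as immediate as you suggest, so the paper's citation is the cleaner justification of the same displayed limit.
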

\begin{proof}
The proof is a variation of the proof of Proposition 4.1.
\par Let $S_n(f,t)=\sum_{k=-n}^n\widehat f(k)e^{ikt}$ be the partial sums of the Fourier series of $f$, and assume that their (C,1) means converge to some 
{\it function} $s$ uniformly on $K$. We consider an arbitrary $h\in C(K)$ and a sequence $S_{n_j}(f,t)$ converging to $h$ uniformly on $K$.
\par Now there are $t_j\in K$ so that $t_j\to t_0$ and (considering a subsequence) corresponding $\theta_j$ so that $t_j\pm\theta_j\in K$ and $a\leq 
n_j\theta_j\leq b$ for all $j$.
\par We apply again Theorem (12.9) of \cite{Z}, Vol I, Ch III and get
$$\frac 12\big(S_{n_j}(f,t+\theta_j)+S_{n_j}(f,t-\theta_j)\big)-(S_{n_j}(f,t)-s(t))\cos n_j\theta_j\to s(t)$$
uniformly for $t\in K$. In fact here we apply the proof rather than the statement of the above Theorem (12.9) of \cite{Z}. One should read the small paragraph a 
few lines before the statement of Theorem (12.6): ``we also observe that if the terms of $u_0+u_1+\ldots$\, depend on a parameter, and if the hypotheses 
concerning this series are satisfied uniformly, the conclusions also hold uniformly''. The relevant hypothesis in our case is the uniform (C,1) summability of 
the Fourier series of $f$. 
\par Since $t_j\to t_0$ and $t_j\pm\theta_j\to t_0$ and since we may assume that $n_j\theta_j\to\phi$ for some $\phi\in[a,b]$, we finally get 
$$h(t_0)-(h(t_0)-s)e^{i\phi}=s$$
and, choosing $h(t_0)\neq s$, we arrive at a contradiction.
\end{proof}
\begin{corollary}
If $K$ is a compact proper subset of $\mathbb T$ which satifies property (21) for some $t_0\in K$ then $C(\mathbb T)\cap U(K)=\emptyset$.\newline
In particular the one third Cantor set $C\subseteq[0,\pi]$ satisfies property (21) for every $t_0\in C$ and hence $C(\mathbb T)\cap U(C)=\emptyset$.
\end{corollary}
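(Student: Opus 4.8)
The plan is to get the general statement directly from Proposition 4.3 together with Fej\'er's theorem, and then to verify property (21) for the Cantor set by an explicit computation with ternary expansions.

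For the first assertion I would argue by contradiction: suppose $f\in C(\mathbb T)\cap U(K)$, where $K$ satisfies (21) at some $t_0\in K$. Since $f$ is continuous, Fej\'er's theorem says the (C,1) means of the Fourier series of $f$ converge to $f$ uniformly on $\mathbb T$, in particular uniformly on $K$. But $C(\mathbb T)\subseteq L^1(\mathbb T)$, so Proposition 4.3 applies to $f$ and says these means do \emph{not} converge uniformly on $K$. This contradiction gives $C(\mathbb T)\cap U(K)=\emptyset$.

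It remains to check that the standard one-third Cantor set $C\subseteq[0,1]\subseteq\mathbb T$ satisfies (21) at every $t_0\in C$ (scaling and translating $C$ inside $\mathbb T$ changes nothing here, and (21) is invariant under such maps). Write $t_0=\sum_{i\ge1}c_i3^{-i}$ with $c_i\in\{0,2\}$, choosing this expansion if $t_0$ is a gap endpoint, and take $a=1$, $b=3$. Given an infinite $\mathbb M\subseteq\mathbb N$, for each sufficiently large $m\in\mathbb M$ pick an integer $j(m)\ge2$ with $3^{-j(m)}\in[a/m,b/m]$; this is possible because $b/a=3$, so the interval $[m/b,m/a]$ contains a power of $3$, and necessarily $j(m)\to\infty$. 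Set $n(m)=\lfloor j(m)/2\rfloor$, so $1\le n(m)<j(m)$ and $n(m)\to\infty$, and put
$$t_m=\sum_{i=1}^{n(m)}c_i3^{-i}+3^{-j(m)},\qquad\theta_m=3^{-j(m)}$$
(for the finitely many remaining $m\in\mathbb M$, just set $t_m=t_0$). Then $|t_m-t_0|\le3^{-n(m)}+3^{-j(m)}\to0$, so $t_m\to t_0$. The crucial point is that $t_m$ and $t_m\pm\theta_m$ all lie in $C$: indeed $t_m-\theta_m=\sum_{i=1}^{n(m)}c_i3^{-i}\in C$; in $t_m+\theta_m=\sum_{i=1}^{n(m)}c_i3^{-i}+2\cdot3^{-j(m)}$ the extra term contributes only the digit $2$ at position $j(m)>n(m)$; and using $3^{-j(m)}=\sum_{i\ge j(m)+1}2\cdot3^{-i}$ we see $t_m$ has ternary digits $c_1,\dots,c_{n(m)}$, then $0$'s at positions $n(m)+1,\dots,j(m)$, then all $2$'s, so all digits are in $\{0,2\}$. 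Hence $\theta_m\in[a/m,b/m]\cap(C-t_m)\cap(t_m-C)$ for all large $m\in\mathbb M$, which is exactly (21); together with the first part this yields $C(\mathbb T)\cap U(C)=\emptyset$.

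I expect the only real obstacle to be this last construction. The naive idea of taking $t_m$ to be the centre of a small level-$N$ interval $I_N$ of $C$ — which is automatically a centre of symmetry of $C\cap I_N$ — fails because those centres are not themselves in $C$. What saves the argument is the simple observation that adding $3^{-j}$ or $2\cdot3^{-j}$ to a point of $C$ with a finite $\{0,2\}$-expansion keeps it in $C$, since $3^{-j}$ has an all-$\{0,2\}$ tail expansion; this manufactures three equally spaced points of $C$ whose midpoint is again in $C$. Matching the common spacing $3^{-j}$ to the window $[a/m,b/m]$ then only requires $b/a\ge3$, and driving $t_m\to t_0$ only requires $j(m)\to\infty$, both of which come for free. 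The reduction via Fej\'er and Proposition 4.3 is immediate.
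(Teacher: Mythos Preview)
Your argument is correct and follows the same two-step strategy as the paper: the general statement is immediate from Fej\'er's theorem together with Proposition~4.2, and the Cantor-set verification of (21) is done by exhibiting, near $t_0$, a point $t_m\in C$ with $t_m\pm 3^{-j}\in C$ for a suitable $j$. The only difference is bookkeeping: the paper picks $t_n$ to be the right (respectively left) endpoint of the level-$(N{+}1)$ third of $I_N$ containing $t_0$, whereas you truncate the ternary expansion of $t_0$ at level $n(m)$ and add $3^{-j(m)}$. Both produce a gap endpoint of $C$, which is automatically a local centre of symmetry at scale $3^{-j}$. Two small remarks: your choice $n(m)=\lfloor j(m)/2\rfloor$ is harmless but unnecessary --- taking $n(m)=j(m)-1$ works just as well and makes $t_m$ converge to $t_0$ at the same rate as in the paper (and in fact then your $t_m$ coincides with the paper's in the case $t_0\in I_{N,-}$); and your aside about invariance of (21) under scaling is slightly delicate because of the constraint $b<2\pi$, but since the paper's $C$ sits in $[0,1]\subseteq[0,\pi]$ with step-$N$ intervals of length $3^{-N}$, your choice $a=1$, $b=3$ is already fine without any rescaling.
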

\begin{proof}
For the first statement we recall that if $f\in C(\mathbb T)$ then the (C,1) means of the Fourier series of $f$ converge to $f$ uniformly on $\mathbb T$.
\par The one third Cantor set satisfies property (21) in a slightly stronger form: there are $a,b$ with $0<a<b<2\pi$ so that for every $t_0\in C$ there are 
$t_n\in C$ for all $n\in\mathbb N$ with $t_n\to t_0$ and $[\frac an,\frac bn]\cap(K-t_n)\cap(t_n-K)\neq\emptyset$ for every $n\in\mathbb N$.
\par In fact, take arbitrary $a,b$ with $0<a<b<2\pi$ and $\frac ba>3$ and any $t_0\in C$. For each $N\in\mathbb N$ we consider the interval $I_N$ of length 
$\frac 1{3^N}$ which appears at the $N$-th step of the construction of $C$ and which contains $t_0$. If $I_{N,-}$ and $I_{N,+}$ are the left and right 
subintervals of $I_N$ of length $\frac 1{3^{N+1}}$, then $t_0$ belongs to one of them (and this is $I_{N+1}$). If $t_0\in I_{N,-}$ then we define $t_n$ to be 
the right endpoint of $I_{N,-}$ for every $n$ with $3^Nb\leq n<3^{N+1}b$. If $t_0\in I_{N,+}$ then we define $t_n$ to be the left endpoint of $I_{N,+}$ for 
every $n$ with $3^Nb\leq n<3^{N+1}b$. In the case $t_0\in I_{N,-}$ the three points $t_n$ and $t_n\pm\frac 1{3^{N+1}}$ belong to $C$ and $\frac 
1{3^{N+1}}\in(\frac an,\frac bn)$ since $3^{N+1}a<3^Nb\leq n<3^{N+1}b$. The case $t_0\in I_{N,+}$ is similar. 
\end{proof}
\par It is not hard to construct countably infinite sets satistfying (20). Take for example $K=\{0\}\cup\{\frac 1n\,|\,n\in\mathbb N\}$. Hence,
\begin{corollary}
There are countably infinite compact $K\subseteq\mathbb T$ such that $A(\mathbb D)\cap U(K)=\emptyset$.  
\end{corollary}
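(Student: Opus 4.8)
The plan is to exhibit explicitly the set already suggested in the text and to check that it falls under the scope of Corollary 4.1 (equivalently, of Proposition 4.1(a), since every $f\in A(\mathbb D)$ has a Fourier series that is $(C,1)$-summable at every point of $\mathbb T$). Take $K=\{0\}\cup\{1/n\,|\,n\in\mathbb N\}$. This $K$ is countably infinite, it is a proper subset of $\mathbb T$ (it misses $\pi/2$, say), and it is compact, being a convergent sequence together with its limit point $0$. Hence the whole task reduces to verifying that $K$ satisfies property (20) for $t_0=0\in K$; Corollary 4.1 then yields $A(\mathbb D)\cap U(K)=\emptyset$.

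Verifying (20) here will be immediate and will use no arithmetic beyond the observation that $1/m$ is the left endpoint of the interval $[1/m,2/m]$. Given an arbitrary infinite $\mathbb M\subseteq\mathbb N$, I would take $a=1$ and $b=2$, so $0<a<b<2\pi$; since $t_0=0$ we have $K-t_0=K$, and for every $m\in\mathbb N$ the point $1/m$ lies at once in $K$ and in $[a/m,b/m]=[1/m,2/m]$. Thus $[a/m,b/m]\cap(K-t_0)\neq\emptyset$ for \emph{all} $m\in\mathbb N$, in particular for infinitely many $m\in\mathbb M$, which is precisely the content of (20). Applying Corollary 4.1 then finishes the argument.

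I do not expect any real obstacle: Proposition 4.1 and Corollary 4.1 already carry all the weight, and what is left is only to write down a countably infinite compact set that accumulates at one of its points along a sequence comparable to $(1/m)$. The only thing that needs a moment's care is making sure the chosen set is genuinely compact and countably infinite while still accumulating quickly enough for (20), and $\{0\}\cup\{1/n\}$ settles all of this at once. The argument is otherwise completely robust: any $\{0\}\cup\{c_n\}$ with $c_n\downarrow 0$ and $c_n\asymp 1/n$ — more generally, any compact countable $K$ for which some fixed window $[a/m,b/m]$ meets $K-t_0$ for infinitely many $m$ — would serve equally well, and if one wanted the sharper statement $C(\mathbb T)\cap U(K)=\emptyset$ one could instead take the symmetric version $K=\{0\}\cup\{\pm1/n\,|\,n\in\mathbb N\}$ and invoke Proposition 4.1(b). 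I would present the corollary with the explicit choice $K=\{0\}\cup\{1/n\}$ together with the one-line verification of (20) above.
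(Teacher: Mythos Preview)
Your proposal is correct and follows exactly the paper's approach: the paper simply names $K=\{0\}\cup\{1/n\,|\,n\in\mathbb N\}$ as an example satisfying (20) and lets Corollary 4.1 do the rest, and you have supplied the explicit one-line verification of (20) with $t_0=0$, $a=1$, $b=2$.
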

\par We close with the following open problems.
\par I. {\it Construct a perfect set $K$ and a function $f\in A(\mathbb D)\cap U(K)$}.
\par II. {\it Study pointwise universality on uncountable sets $E$ with Lebesgue measure zero for functions in $C(\mathbb T)$ or $A(\mathbb D)$}.\newline
Of course the functions $g$ in Definition 1.1 of pointwise universality must belong to the first class of Baire.
\par III. {\it Is it true that, if $E$ has positive Hausdorff dimension, then $E$ does not accept universality?}
\par {\bf Acknowledgement}. We would like to thank the referee for valuable comments which cleared certain subtle points of the proofs and helped us improve the 
presentation of this paper. Proposition 4.1 in our original paper contained nothing about sets $K$ symmetric with respect to $t_0$ and hence our original 
Corollary 4.1 considered only the result $A(\mathbb D)\cap U(C)=\emptyset$. The referee asked whether we could prove that $C(\mathbb T)\cap U(C)=\emptyset$ and 
this prompted us to work first with symmetric sets $K$ and get that $C(\mathbb T)\cap U(C^*)=\emptyset$. After a while we proved the stronger $C(\mathbb T)\cap 
U(C)=\emptyset$. We include the weaker result since it shows that we do not need extra hypotheses (i.e. the uniform (C,1) summability) in the presence of the 
symmetry of the set $K$.

\end{document}